\documentclass[12pt,reqno]{amsart}

\usepackage{amsmath,amsfonts,amsthm,mathrsfs,amssymb,amscd,enumerate,url,tikz-cd}

\numberwithin{equation}{section}

\input{xypic}
\xyoption{all}

\newtheorem{theorem}{Theorem}[section]

\newtheorem{corollary}[theorem]{Corollary}
\newtheorem{claim}[theorem]{Claim}

\newcommand{\Z}{\mathbb{Z}}
\newcommand{\C}{\mathbb{C}}

\newcommand{\Q}{\mathbb{Q}}

\newcommand{\N}{\mathbb N}

\newcommand{\E}{\mathcal{E}}
\newcommand{\F}{\mathcal{F}}

\newcommand{\Spec}{\operatorname{Spec}}

\newcommand{\mf}[1]{\mathfrak{#1}}

\newcommand{\mc}[1]{\mathcal{#1}}
\renewcommand{\t}[1]{\widetilde{#1}}

\newcommand{\Y}{\mathscr{Y}}
\newcommand{\X}{\mathscr{X}}

\usepackage{marginnote}
\usetikzlibrary{calc}

\begin{document}
\title[Direct image and pullback of Parabolic bundles]{Direct image and pullback of Parabolic vector bundles}

\author[I. Biswas]{Indranil Biswas} 
	
\address{Department of Mathematics, Shiv Nadar University, NH91, Tehsil Dadri, Greater Noida, Uttar
Pradesh 201314, India} 
	
\email{indranil.biswas@snu.edu.in, indranil29@gmail.com} 
	
\author[C. Gangopadhyay]{Chandranandan Gangopadhyay} 

\address{Department of Mathematics, Shiv Nadar University, NH91, Tehsil Dadri, Greater Noida, Uttar
Pradesh 201314, India} 

\email{chandranandan.g@snu.edu.in}

\begin{abstract}
Niels Borne established a natural correspondence between the parabolic vector bundles on curves
and vector bundles on root stacks. The notions of direct image of parabolic vector bundles and
pullback of parabolic vector bundles were studied in \cite{Alfaya_Biswas}. We show that
these two notions correspond to the notions direct image of vector bundles on root stacks and
pullback of vector bundles on root stacks respectively. Some applications of this correspondence
are given.
\end{abstract}

\subjclass[2010]{14D23, 14H60}

\keywords{Root stack, parabolic vector bundle, direct image, orthogonal and symplectic bundles}

\maketitle

\section{Introduction}

The notion of parabolic vector bundles on curves was introduced by Mehta and Seshadri in \cite{MS}. Over time, it
has turned out to be very useful in numerous contexts. Cadman studied line bundles on root stacks
\cite[Theorem 4.1]{Ca}. Borne gave a natural correspondence between parabolic vector bundles on curves and vector bundles
on root stacks \cite{Borne}. The notions of direct image of parabolic vector bundles and
pullback of parabolic vector bundles were studied in \cite{Alfaya_Biswas}.

Our aim here is to show that
the above mentioned correspondence of Borne takes direct images of parabolic vector bundles
(respectively, pullbacks of parabolic vector bundles) to the usual direct images (respectively, pullbacks)
of vector bundles on root stacks. See Theorem \ref{main theorem} for precise statements.

Theorem \ref{main theorem} has some consequences for parabolic orthogonal and parabolic symplectic bundles
(see Corollary \ref{cor1} and Corollary \ref{cor2}).

\section{Parabolic vector bundles}

We recall the definition of parabolic vector bundles following \cite[Definition 1]{Borne}, \cite{MS}.
Take a smooth projective curve $X$ over $\C$. Fix ordered $m$ distinct points $D_X\,=\,(x_1,\,x_2,\,\cdots,\,x_m)$
of $X$, so $x_i\,\in\, X$ for all $1\, \leq\, i\, \leq \, m$ and $x_i\,\neq\, x_k$ for all $i\,\neq\, k$. Take 
${\bf r}\,=\, (r_1,\,r_2,\,\cdots,\, r_m)\,\in\, \N^m\,.$ A parabolic vector
bundle $E_*$ on $(X,\,D_X)$ with weights in $\frac{1}{\bf r}\Z^m\,:=\,
\prod\limits_i \frac{\Z}{r_i}$ consists of a vector bundle $E$ on $X$ equipped with decreasing filtrations
\begin{equation}\label{eqn_filtration}
E_{X,x_i}\,=\,E^0_{i}\,\supset\, E^{1}_i\,\supset\, \cdots \,\supset\, E^{r_i}_i\,=\,\varpi_{x_i}E_{X,x_i}
\end{equation}
for each $1\, \leq\, i\, \leq \, m$. Here $\varpi_x$ is a generator of the maximal ideal $\mf m_x$ of $\mc O_{X,x}$. The
vector bundle $E$ is called the underlying vector bundle of $E_*$. Define the parabolic weight of $E_i^j$
to be $\frac{\alpha^j_i}{r_i}$, where $\alpha^j_i\,:=\, \max\{k\geq j\,\,\big\vert\,\, E_i^k\,=\,E_i^j\}$. Define 
$$S\ :=\ \{\frac{\alpha^i_j}{r_i}\,\,\big\vert\, \, 0\,\leq\, j\,<\, r_i,\,\, \alpha^j_i\,\neq\, r_i\}
\,=\,\{\alpha_1\,<\,\alpha_2\,<\,\cdots\,<\, \alpha_k\}\,\subset\, \frac{1}{r_i}\Z\cap [0,\, 1).$$
Note that the data of the filtration (\ref{eqn_filtration}) is same as the data of the set of
weights $S$ together with decreasing the filtration
$$E_{X,x_i}\,=\,E^{r_i\alpha_1}_{i}\,\supsetneq\, E^{r_i\alpha_2}_i\,\supsetneq\, \cdots
\,\supsetneq \,E^{r_i\alpha_k}_i\,\supsetneq\, \varpi_{x_i}E_{X,x_i}.$$
Similarly, we have the notion of morphisms between two parabolic vector bundles: given two parabolic vector bundles
$E_*$ and $F_*$ on $(X,\,D_X)$ with weights in $\frac{1}{\bf r}\Z^m$, a morphism from $E_*\,\longrightarrow\, F_*$
from $E_*$ to $F_*$ is a morphism of the underlying vector bundles 
$$\alpha\,:\, E\,\longrightarrow\, F$$ 
that preserves the filtrations (see \eqref{eqn_filtration}), in other words, $\alpha(E^j_i)
\,\subset\, F^j_i$ for all $i,\,j$.
Denote by ${\rm Par}_{\frac{1}{\bf r}}(X,\,D_X)$ the category of Parabolic vector bundles on $(X,\,D_X)$
with weights in $\frac{1}{\bf r}\Z^m$.

\subsection{Parabolic vector bundles and vector bundles on root stacks}\label{section_Parabolic bundles and bundles on root stacks}

Take $X$, $D_X$ and ${\bf r}$ as before.
Let $$\X\ :=\ \sqrt[{\bf r}]{D_X/X}$$ be the root stack associated to the tuple $({\bf r},\, D_X)$
\cite[Definition 2.1]{Borne_Laaroussi}. Denote by ${\rm Vect}(\X)$ the category of vector bundles on $\X$.
Then by \cite[Theorem 2.4.7]{Borne} we have an equivalence of categories 
$${\rm Vect}(\X)\ \xrightarrow{\,\,\,\sim\,\,\,}\ {\rm Par}_{\frac{1}{\bf r}}(X,D),\ \ \,
\E\, \longmapsto\, \widehat{\E}.$$ The construction of this equivalence in one direction is as follows.
Take a vector bundle $\E$ over $\X$. Then we have a fibered diagram
\[
\begin{tikzcd}
\X_i\ :=\ \left[{\Spec} \dfrac{\mc O_{X,x_i}[T]}{(T^{r_i}-\varpi_{x_i})}\middle/\mu_{r_i} \right] \ar[r] \ar[d] & \X \ar[d,"\pi_X"] \\
\Spec \mc O_{X,x_i} \ar[r] & X 
\end{tikzcd}
\]
Here $\mu_{r_i}$ is the group of $r_i$--th roots of unity.
In particular, restricting $\mc E$ to $\X_i$ we get a $\mu_{r_i}$--equivariant sheaf on
${\Spec}\, \frac{\mc O_{X,x_i}[T]}{(T^{r_i}-\varpi_{x_i})}$. Now, any $\mu_{r_i}$--equivariant module $M$ on
$\frac{\mc O_{X,x_i}[T]}{(T^{r_i}-\varpi_x)}$ is $\mu_{r_i}$--graded with $M\,=\,
\bigoplus\limits_{j=0}^{r_i-1}M_i$, where $M_j\,=\,\{m\,\in\, M\,\,\big\vert\,\, g\cdot m\,=\,
\exp({\frac{2\pi \sqrt{-1} j}{r_i}})m\}$ and the action of $T$ induces a graded homomorphism $M\,
\longrightarrow\, M[1]$, in other words, there are inclusion maps $$M_0 \,\hookrightarrow\, M_1
\,\hookrightarrow\, M_2 \,\hookrightarrow\, \cdots \,\hookrightarrow\, M_{r_i} \,\hookrightarrow\, M_0$$
such that the entire composition coincides with multiplication by $T^{r_i}\,=\,\varpi_{x_i}$. Therefore,
defining $M^i\,:=\,M_{n-i}\,\subset\, M_0$, we get a parabolic filtration
$$M^0\,\supset \,M^{1}\,\supset\, M^{2}\,\supset\, \cdots \,\supset\, \varpi_{x_i}M^{0}.$$

\subsection{Direct image of parabolic vector bundles}

Let $f\,:\,X\,\longrightarrow\, Y$ be a finite flat map of smooth projective curves over $\C$. As before,
$D_X\,=\,(x_1,\,x_2,\,\cdots,\, x_m)$ with $x_i\,\in\, X$ and $x_i\,\neq\, x_k$ for all $i\,\neq\, k$. Take
${\bf r}\,=\,(r_1,\,r_2,\, \cdots,\,r_m)\,\in \,\N^m$. Denote by $R_f$ the ramification locus of $f$.
Let $E_*$ be a parabolic vector bundle on $(X,\,D_X)$ whose underlying vector bundle is $E$. 
In \cite[\S~4]{Alfaya_Biswas} the parabolic direct image $f_*E_*$ was defined on $(Y,\,f(D_X\cup R_f))$ with
underlying vector bundle $f_*E$. We briefly recall the construction: For $y\,\in\, Y$, let $f^{-1}(y)
\,=\,\bigsqcup\limits_j\Spec \frac{\mc O_{X,p_j}}{\varpi^{e_j}_{p_j}}$. 
For each $p_j$ we have the filtration coming from the parabolic structure of $E_*$ given by
$$E_{X,p_j}\,=\,E^0_j\,\supset\, E^1_j\,\supset\, \cdots \,\supset\, E^{r_j}_j\,=\,\varpi_{p_j}E_{X,p_j}\,.$$
This induces a filtration 
$$
E_{X,p_j}\, =\, E^0_j\,\supset\, E^1_j\,\supset\, \ldots \,\supset\, E^{r_j}_j
\,=\,\varpi_{p_j}E_{X,p_j}\, \supset\, \varpi_{p_j}E^1_j\,\supset\, \cdots \,\supset\,
 \varpi_{p_j}E^{r_j}_i
$$
\begin{equation}\label{eqn_direct_image_parabolic}
=\, \varpi^2_{p_j}E_{X,p_j}\,\supset\, \varpi^2_{p_j}E^1_j\,\supset\, \ldots \,\supset\,
\varpi^{e_j-1}_{p_j}E^{r_j}_i\,=\, \varpi^{e_j}_{p_j}E_{X,p_j}.
\end{equation}
Define $$E^q_j\ :=\ \varpi_{p_j}^lE^k_j \hspace{.5cm}\, \,\, {\rm if }\, \, q\,\in\,\left[ \frac{r_jl+k}{r_je_j},\, \frac{r_jl+k+1}{r_je_j}\right)\cap \Q$$
for $0\,\leq\, l\,\leq\, e_j-1$ and $0\,\leq\, k\,\leq\, r_j-1$.
We get the parabolic structure on $$(f_*E)_{Y,y}\ =\ \bigcap\limits_j E_{X,p_j}\ \subset\ E_{\eta_X}$$ 
(here $\eta_X$ is the generic point of $X$) with weight in $\frac{1}{{\rm lcm}(r_je_j)}$ by taking intersection
of the above filtrations: For $q\,\in\, \frac{1}{\rm lcm(r_je_j)}\Z \cap [0,\,1]$, if $q\,\in\,
[\frac{a-1}{r_je_j},\, \frac{a}{r_je_j})$ define 
$E^q_y$ to be $\bigcap\limits_j E^q_j$. 

\subsection{Pullback of parabolic vector bundles}\label{section_pullback}

Let $f\,:\,X\,\longrightarrow\, Y$ be a finite flat map of smooth curves over $\C$. Take $n$ distinct
ordered points $D_Y\,=\,(y_1,\,y_2,\,\cdots,\, y_n)$ of $Y$.
Also, take ${\bf s}\,=\,(s_1,\,s_2,\,\cdots,\, s_n)\,\in\, \N^{n}$. Consider a parabolic vector bundle $F_*$ on
$(Y,\, D_Y)$ with underlying vector bundle $F$. In \cite[\S~3]{Alfaya_Biswas} the pullback $f^*F_*$
on $(X,\,f^{-1}(D_Y)_{\rm red})$ was constructed; the construction is recalled.
If $F$ is a line bundle, then the underlying line bundle of $f^*F_*$ is given by 
$$F\otimes \mc O(\sum\limits_{y\in D_Y} \sum\limits_{x\in f^{-1}(y)} \lfloor \alpha_ye_x\rfloor x),$$
where $\alpha_y$ is the weight of $F_{Y,y}$ and $e_x$ is the ramification degree at $x$. The weight of $f^*F_*$
at $x$ is defined as $\{\alpha_ye_x\}$.

If $F$ is a vector bundle of rank at least two, choose an open covering
$U_1,\,\cdots,\,U_m$ of $Y$ such that 
$$F_*\big\vert_{U_j}\ =\ \bigoplus\limits_k L(j,k)_* ,$$
where $L(j,k)_*$ are parabolic line bundles. Define $f^*(F_*\big\vert_{U_j})\ :=\ \bigoplus\limits_k f^*L(j,k)_*$. Note
that $f^*(F_*\big\vert_{U_j})\big\vert_{f^{-1}(U_j)\setminus (f^{-1}(D_X)\cup R_f)}$ is canonically identified with
$f^*F\big\vert_{f^{-1}(U_j)\setminus (f^{-1}(D_Y)\cup R_f)}$ (with the trivial parabolic structure). Using these
identifications it can be checked that $f^*(F_*\big\vert_{U_j})$ glue together to give a parabolic vector
bundle on $X$.

\section{Direct image of vector bundles on root stacks}

Take $f\,:\, X\,\longrightarrow Y$,
$$D_X\,=\,(x_1,\,x_2,\, \cdots,\,x_m),\ \ \, D_Y\ =\ (y_1,\,y_2,\,\cdots,\, y_n)$$ and
$${\bf r}\ =\ (r_1,\,r_2,\, \cdots,\, r_m),\ \ \, {\bf s}\ =\ (s_1\, ,s_2,\, \cdots,\, s_n)$$
such that $f^{-1}(\sum s_jy_j)=R_f+\sum r_ix_i$. Let $$\X\ :=\ \sqrt[{\bf r}]{D_X/X}$$ be the root stack associated to the
pair $({\bf r},\, D_X)$ and let $$\Y\ :=\ \sqrt[{\bf s}]{D_Y/Y}$$ be the root stack associated to the
pair $({\bf s},\, D_Y)$ \cite[Definition 2.1]{Borne_Laaroussi}. Suppose we have a commutative diagram
\begin{equation}\label{eqn_diagram_root_stacks}
\begin{tikzcd}
\X \ar[r,"\widetilde f"] \ar[d,"\pi_X"] & \Y \ar[d,"\pi_Y"] \\
X \ar[r,"f"] & Y
\end{tikzcd}
\end{equation}
such that $\t f$ is \'etale. 

\begin{theorem}\label{main theorem} 
The two functors 
 $${\rm Vect}(\X)\ \xrightarrow{\,\,\,{\widetilde f}_*\,\,\, }\ {\rm Vect}(\Y)\ \xrightarrow{\,\,\,\sim\,\,\,}
\ {\rm Par}_{\frac{1}{\bf r}}(Y,\,D_Y),
$$
$$
{\rm Vect}(\X)\ \xrightarrow{\,\,\,\sim\,\,\,}\ {\rm Par}_{\frac{1}{\bf r}}(X,\,D_X)\ \xrightarrow{\,\,\,f_*\,\,\,}
\ {\rm Par}_{\frac{1}{\bf s}}(Y,\,D_Y)$$
are isomorphic. Similarly, the two functors 
$${\rm Vect}(\Y)\ \xrightarrow{\,\,\,{\widetilde f}^*\,\,\,}\ {\rm Vect}(\X)\ \xrightarrow{\,\,\,\sim\,\,\,}\
{\rm Par}_{\frac{1}{\bf r}}(X,\,D_X),
$$
$$ 
{\rm Vect}(\Y)\ \xrightarrow{\,\,\,\sim\,\,\,}\ {\rm Par}_{\frac{1}{\bf s}}(Y,\, D_Y)\ \xrightarrow{\,\,\,f^*
\,\,\,}\ {\rm Par}_{\frac{1}{\bf r}}(X,\, D_X)$$
are isomorphic. 
\end{theorem}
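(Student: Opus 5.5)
Both statements are local on $Y$ and compatible with restriction to open subsets, so the plan is to reduce to a computation over $\Spec \mc O_{Y,y}$ for each $y\,\in\, Y$. Away from $D_Y$ and $f(D_X\cup R_f)$ the stacks $\X$ and $\Y$ coincide with $X$ and $Y$, the map $\t f$ coincides with $f$, and all parabolic structures are trivial, so both functors reduce to the usual $f_*$ and $f^*$. On the overlaps the identifications are canonical: they are the identity on the generic fibres $E_{\eta_X}$ and $(f_*E)_{\eta_Y}$. Hence it suffices to construct, functorially in $\E$, an isomorphism of the underlying vector bundles that is the identity on the generic fibre, and then to check that the filtrations at each $y$ agree.

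\textbf{Direct image.} First, since $\pi_X$ is a coarse moduli map and the invariant part of a $\mu_r$--graded module is its degree-zero piece $M_0$, we have $\pi_{Y*}\t f_*\E \,=\, f_*\pi_{X*}\E \,=\, f_*E$. So the underlying bundles agree canonically. Next fix $y$ with local uniformizer $\varpi_y$. Base change $\Y$ to $\Y_y\,=\,[\Spec \mc O_{Y,y}[S]/(S^{s}-\varpi_y)\,/\,\mu_s]$. Because $\t f$ is étale and $f^{-1}(sy)\,=\,\sum_j s\,p_j$ with $s \,=\, e_jr_j$ (this is what the hypothesis $f^{-1}(\sum s_jy_j)\,=\,R_f+\sum r_ix_i$ gives), the fibre product $\X\times_{\Y}\Y_y$ is the disjoint union over $p_j\in f^{-1}(y)$ of the stacks $\X_{p_j}$. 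For each $j$, the map $\X_{p_j}\to\Y_y$ is induced by $S\mapsto u_jT^{e_j}$, where $u_j$ is a unit, together with the inclusion $\mu_{r_j}\hookrightarrow\mu_{s}$, after passing to the strict henselization or completion and using an $e_j$-th root of the unit. Pushforward along $[V/\mu_{r_j}]\to[W/\mu_s]$ is induction from $\mu_{r_j}$ to $\mu_s$ composed with pushforward along $V\to W$. So the $\mu_s$--graded $\mc O_{Y,y}[S]$--module attached to $\t f_*\E$ is $\bigoplus_j \mathrm{Ind}(M^{(j)})$. I would then compute its graded pieces. The degree $a$ piece of the $j$-th summand, viewed inside $E_{X,p_j}$ via multiplication by powers of $S$, should be $\varpi_{p_j}^{l}E_j^k$, where $a$ corresponds to $r_jl+k$. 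The intersection over $j$ inside $E_{\eta_X}$ then reproduces exactly the filtration defined in (\ref{eqn_direct_image_parabolic}) and the subsequent definition of $E^q_y$.

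\textbf{Pullback.} I would use adjunction rather than compute directly. Both $f^*$ on parabolic bundles (by \cite{Alfaya_Biswas}) and $\t f^*$ are left adjoint to the respective direct images. Once the direct-image square is shown to commute naturally, the left adjoints of isomorphic functors are isomorphic, since Borne's functor is an equivalence. If the adjunction in \cite{Alfaya_Biswas} is not available in exactly this form, the fallback is a direct check. The question is local, and $\E$ locally splits on $\Y_y$ as a sum of line bundles $\mc O(a\cdot\text{root divisor})$, so it suffices to treat line bundles. For these, $\t f^*$ of the $a$-th root line bundle is $\mc O(a e_x$ times the root divisor on $\X)$. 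Under Borne's correspondence this is the line bundle $\mc O(\lfloor ae_x/s\rfloor x)$ with weight $\{ae_x/s\}$, which matches the formula in Section \ref{section_pullback}. One then checks compatibility with the gluing maps.

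\textbf{Main obstacle.} I expect the hardest step to be the graded computation of the induced module in the direct-image case. This means pinning down the local form of $\t f$ near the stacky points, using étaleness to get $S\,=\,u T^{e}$ with the correct group map, and then matching indices, in particular the index reversal $M^i\,=\,M_{r-i}$, with the weight bookkeeping in (\ref{eqn_direct_image_parabolic}) so that the two filtrations agree on the nose.
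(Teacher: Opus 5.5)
\textbf{The local model of $\t f$ is wrong.} Your direct-image outline follows the paper's: underlying bundle $\pi_{Y*}\t f_*\E=f_*\pi_{X*}\E=f_*E$, base change to $\Y_y$, graded pieces via an induced (equivalently, invariant) pushforward, then intersection over the $p_j$. Your induction from $\mu_{r_j}$ to $\mu_s$ is the paper's $\mu_{r_j}$-invariant pushforward from the $\mu_s$-torsor in Claim \ref{claim}. The problem is the local form of $\t f$ on which everything rests.

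Étaleness does not give $S\mapsto u_jT^{e_j}$. It gives $S\mapsto wT$ with $w$ a unit and $w^s=u$, where $\varpi_y=u\,\varpi_{p_j}^{e_j}$; so you need an $s$-th root of $u$, not an $e_j$-th root. The reason is this. The chart map $\Spec B_j[T]/(T^{r_j}-\varpi_{p_j})\to\Y_y$ is given by a section $P=T^a\cdot(\text{unit})$ with $P^s$ a unit times $\varpi_y=u\,T^{r_je_j}$. Hence $as=r_je_j$, and étaleness of the induced map $S\mapsto T^aX$ (the paper's $T\mapsto T_j^aX$) forces $a=1$.

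Your formula is inconsistent with your own $s=e_jr_j$ in two ways:
\begin{itemize}
\item $(u_jT^{e_j})^s=u_j^s\varpi_{p_j}^{e_j^2}$, which is not $\varpi_y$ unless $e_j=1$;
\item $S\mapsto u_jT^{e_j}$ is not equivariant for $\mu_{r_j}\hookrightarrow\mu_s$ unless $r_j\mid e_j-1$.
\end{itemize}
What you wrote is the local form of the non-étale map between root stacks of the same order $s=r_j$, with group map $\zeta\mapsto\zeta^{e_j}$. That is exactly what étaleness excludes. So the step you flag as the main obstacle would be carried out with the wrong answer, and the graded pieces you assert do not follow from it.

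With the correct model the computation does work. Write $N=\mathrm{Ind}_{\mu_{r_j}}^{\mu_s}M$. Then $N_a\cong M_{a\bmod r_j}$, and $S$ acts by $wT$. So $S$ runs $e_j$ times around $M_0\to M_1\to\cdots\to M_{r_j-1}\to M_0$. Inside $N_0=M_0$ one gets $S^iN_{s-i}=\varpi_{p_j}^lE^k_j$ for $i=r_jl+k$, which is \eqref{eqn_direct_image_parabolic}; compare \eqref{eqn_local}.

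\textbf{Splitting the base change.} Over $\mc O_{Y,y}$, $\X\times_\Y\Y_y=\X\times_X\Spec B$ is not a disjoint union of the $\X_{p_j}$. They are open substacks that overlap over the generic point, and the union splits only after completion or henselization. This is why the paper uses the equalizer \eqref{eqn_global}. Your intersection inside $E_{\eta_X}$ is the right replacement, but you must justify it as descent from the completion.

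\textbf{Pullback.} Your adjunction argument is a genuinely different route. The paper instead splits $\F$ locally into line bundles and computes on the chart of Claim \ref{claim}. Formally your route is sound: if $\Phi_Y\circ\t f_*\cong f_*\circ\Phi_X$ naturally and $f^*\dashv f_*$ on the parabolic side, uniqueness of left adjoints gives $\Phi_X\circ\t f^*\cong f^*\circ\Phi_Y$, with naturality and gluing for free. It has two dependencies:
\begin{itemize}
\item the parabolic adjunction must be established independently of root stacks, otherwise the argument is circular;
\item it inherits the gap in the direct-image part.
\end{itemize}

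Your fallback has the same error. Let $\mc D_y$ and $\mc D_x$ be the root divisors, so $s\,\mc D_y=\pi_Y^*y$ and $r_x\mc D_x=\pi_X^*x$. Then
\[
s\,\t f^*\mc D_y=\pi_X^*f^*y=\sum_{x\mapsto y}e_xr_x\,\mc D_x=s\sum_{x\mapsto y}\mc D_x .
\]
So near $x$ one has $\t f^*\mc O(a\mc D_y)=\mc O(a\mc D_x)$, not $\mc O(ae_x\mc D_x)$. It is $\mc O(a\mc D_x)$ that corresponds to $\mc O(\lfloor a/r_x\rfloor x)$ with weight $\{a/r_x\}=\{ae_x/s\}$. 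From $\mc O(ae_x\mc D_x)$ you would get $\lfloor ae_x^2/s\rfloor$. Your final formula is correct, but it follows from the correct local model, not from the one you wrote.
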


\begin{proof}
Let $\E$ be a vector bundle on $\X$ and $\F$ a vector bundle on $\Y$. We need to show that there are canonical
isomorphisms of parabolic vector bundles
$$\widehat{(\widetilde{f}_*\E)}\ \cong\ f_*(\widehat{\E}), \ \ \ \widehat{(\widetilde{f}^*\F)}\ \cong\
f^*(\widehat{\F})$$ on $(Y,\,D_Y)$ and $(X,\,D_X)$ respectively.
We begin by setting up some notation.

Take $y\,\in\, Y$, and let 
 $$f^{-1}(y)\ =\ \bigsqcup\limits_{j=1}^l \Spec \frac{\mc O_{X,p_j}}{\varpi^{e_i}_{p_j}}.$$ 
Let $A\,:=\, \mc O_{Y,y},\, f^{-1}(\Spec A)\,=\,\Spec B$ and $B_j\,:=\, \mc O_{X,p_j}$, so we have the diagram
\[
\begin{tikzcd}
\Spec B_i \ar[r] & \Spec B \ar[r] \ar[d] & \Spec A \ar[d] \\
& X \ar[r,"f"] & Y
\end{tikzcd}
\]
Denote by $\varpi$ a uniformizing parameter of $y$ and by $\varpi_j$ a uniformizing parameter of $p_j$.
If $p_j\,=\,x_i$ for some $i$, define $r_j\,:=\,r_i$; otherwise, define $r_j\,:=\,1$. If
$y\,=\,s_i$ for some $i$, define $s\,:=\,s_i$; otherwise, define $s\,=\,1$. 

Then \eqref{eqn_diagram_root_stacks} induces a commutative diagram
\begin{equation}\label{a1}
\begin{tikzcd}
\X_j\,:=\, \left[{\Spec}~ \dfrac{B_j[T_j]}{(T_j^{r_j}-\varpi_{j})} \middle/\mu_{r_j} \right] \ar[r,"\psi_2"] \ar[d] & \Y_y:=\left[{\Spec} \dfrac{A[T]}{(T^{s}-\varpi)}\middle/\mu_{s} \right] \ar[d] \\
\Spec B_i \ar[r] & \Spec A
\end{tikzcd}
\end{equation}
Now consider the composition of morphisms
\begin{equation}\label{eqn_morphism_scheme_to_root_stack}
{\Spec}~ \dfrac{B_j[T_j]}{(T_j^{r_j}-\varpi_{j})} \,\longrightarrow\, \X_j
\,\longrightarrow\, \Y_y .
\end{equation}
Recall that any morphism from a scheme $S$ to the root stack $\Y_y$
corresponds to data 
$$(S\xrightarrow{g} \Spec A,\, \mc L,\, P,\, \rho),$$ 
where $\mc L$ is a line bundle on $S$, $P$ a global section of $\mc L$ and $\rho\,:\,\mc L^{\otimes s}
\,\xrightarrow{\,\,\,\sim\,\,\,}\, \mc O_S$ an isomorphism such that $P^{\otimes s}\,\longmapsto
\, g^*\varpi$ \cite[\S 10.3.9]{Olsson} (see \eqref{a1}). In our case,
the data corresponding to the morphism (\ref{eqn_morphism_scheme_to_root_stack}) is given by 
$$(\Spec \frac{B_j[T_j]}{(T_j^{r_j}-\varpi_{j})}\to \Spec A,\ \frac{B_j[T_j]}{(T_j^{r_j}-\varpi_{j})},\
P,\ v).$$
Here $P\,\in\, \frac{B_j[T_j]}{(T_j^{r_j}-\varpi_{j})}$ with $P^{s}\,=\,v\varpi$, where $v$ is a unit in
$\frac{B_j[T_j]}{(T_j^{r_j}-\varpi_{j})}$.
Denote $\varpi_j^{e_j}u\,=\,\varpi$, where $u$ is a unit in $B_j$. Let $P\,=\,T_j^av'$, where $v'$ is a unit in
$\frac{B_j[T_j]}{(T_j^{r_j}-\varpi_{j})}$. Then we have
\begin{equation}\label{a2}
P^s\ =\ T_j^{as}v'^s\ =\ v\varpi\ =\ v\varpi_j^{e_j}u\ =\ vT_j^{r_je_j}u .
\end{equation}
In particular, this implies that $as\,=\,r_je_j$ and $v'^s\,=\,vu$.

\begin{claim}\label{claim}
We have a $2$-fiber diagram
\begin{equation}\label{eqn_claim}
\begin{tikzcd} 
{\Spec}~ \dfrac{B_j[T_j,X]}{(T_j^{r_j}-\varpi_{j}, X^s-u)} \ar[r,"\phi_1"] \ar[d,"\pi_1"] & \left[ {\Spec}~ \dfrac{B_j[T_j,X]}{(T_j^{r_j}-\varpi_{j}, X^s-u)} \middle/ \mu_{r_j} \right] \ar[r,"\psi_1"] \ar[d,"\pi_2"] & {\Spec} \dfrac{A[T]}{(T^{s}-\varpi)} \ar[d,"\pi_3"] \\
 {\Spec}~ \dfrac{B_j[T_j]}{(T_j^{r_j}-\varpi_{j})} \ar[r,"\phi_2"] & \left[{\Spec}~ \dfrac{B_j[T_j]}{(T_j^{r_j}-\varpi_{j})} \middle/\mu_{r_j} \right] \ar[r,"\psi_2"] & \left[{\Spec} \dfrac{A[T]}{(T^{s}-\varpi)}\middle/\mu_{s} \right]
\end{tikzcd}
\end{equation}
Here the $\mu_{r_j}$-action on ${\Spec}~ \dfrac{B_j[T_j,X]}{(T_j^{r_j}-\varpi_{j}, X^s-u)}$ is defined by 
$$\exp\left({\frac{2\pi\sqrt{-1}}{r_j}}\right)\cdot T_j\,:=\,\exp\left({\frac{2\pi\sqrt{-1}}{r_j}}\right) T_j,\ \ \,
\exp\left({\frac{2\pi\sqrt{-1}}{r_j}}\right)\cdot X
\,:=\,\exp\left({-\frac{2\pi\sqrt{-1}a}{r_j}}\right)X,$$
and the morphism $\psi_1\circ \phi_1$ is defined by $T\, \longmapsto\, T^a_jX$ (see \eqref{a2}).
\end{claim}

\begin{proof}[Proof of claim]
Note that the morphism 
\begin{equation}\label{eqn_etale_map}
{\Spec}~ \dfrac{B_j[T_j,X]}{(T_j^{r_j}-\varpi_{j}, X^s-u)}\ \longrightarrow
\ {\Spec} \dfrac{A[T]}{(T^{s}-\varpi)}
\end{equation}
is well defined because 
$(T^a_jX)^s\,=\,T^{as}X^s\,=\,T^{r_je_j}u\,=\,\varpi_j^{e_j}u\,=\,\varpi$, and by the definition of
$\mu_{r_j}$-action on ${\Spec}~ \frac{B_j[T_j,X]}{(T^{r_j}-\varpi_{j}, X^s-u)}$, the element $T^a_jX$
is $\mu_{r_j}$-invariant. Hence (\ref{eqn_etale_map}) is $\mu_{r_j}$-invariant.
The $2$-commutativity of the diagram
\[
\begin{tikzcd} 
{\Spec}~ \dfrac{B_j[T_j,X]}{(T^{r_j}-\varpi_{j}, X^s-u)} \ar[r] \ar[d] & {\Spec} \dfrac{A[T]}{(T^{s}-\varpi)} \ar[d] \\
 {\Spec}~ \dfrac{B_j[T_j]}{(T_j^{r_j}-\varpi_{j})} \ar[r] & \left[{\Spec} \dfrac{A[T]}{(T^{s}-\varpi)}\middle/\mu_{s} \right]
\end{tikzcd}
\]
follows from the fact that the two compositions correspond to the following two:
$$(\frac{B_j[T_j,X]}{(T_j^{r_j}-\varpi_{j}, X^s-u)},\, P\,=\,T_j^av',\, v),\ \ \,
(\frac{B_j[T_j,X]}{(T_j^{r_j}-\varpi_{j}, X^s-u)},\, T^aX,\, 1),$$ 
and these two are actually isomorphic in the sense of \cite[\S~10.3.9]{Olsson}, with the isomorphism given by
multiplication with $v'^{-1}X$. Since \eqref{eqn_etale_map} is $\mu_{r_j}$-invariant and $\pi_1$ is $\mu_{r_j}$-equivariant, we have a commutative diagram as in (\ref{eqn_claim}). The diagram is fibered because both $\pi_1$ and $\pi_2$ are $\mu_{r_j}$-torsors. This completes the proof of the claim. 
\end{proof}

Since $\psi_1$ is \'etale, it follows that the map $\psi_1\circ \phi_1$ is also \'etale. This implies that
$a\,=\,1$ (see \eqref{a2}). Let us denote by $\E_j$
the pullback of the sheaf $\E$ on $\X$ to $\X_j$ and the decomposition of $\mu_{r_j}$-equivariant sheaf
$\phi_2^*\E_j$ by $$E_{j,0}\,\oplus\, E_{j,1}\,\oplus\,\cdots\,\oplus\, E_{j,r_j-1}$$ 
(see \S~\ref{section_Parabolic bundles and bundles on root stacks}). We want to understand the decomposition
of the $\mu_s$-equivariant sheaf $\pi_{3}^*\psi_{2*}\E_j$. By flat base change, 
$$\pi_{3}^*\psi_{2*}\E_j\ \cong\ \psi_{1*}\pi_2^*\E_j.$$ Now note that we have a factorization of $\psi_1$ as
\[
{\Spec}~ \dfrac{B_j[T_j,X]}{(T_j^{r_j}-\varpi_{j}, X^s-u)}\ \xrightarrow{\,\,\,\phi_1} \,\,\,
\Bigg[ {\Spec}~ \dfrac{B_j[T_j,X]}{(T_j^{r_j}-\varpi_{j}, X^s-u)} \Bigg/ \mu_{r_j} \Bigg]
\]
\[
\xrightarrow{\,\,\,\psi_1'\,\,\,} {\Spec}~ \dfrac{B_j[T_j,X]}{(T_j^{r_j}-\varpi_{j}, X^s-u)}
\Bigg/ \mu_{r_j} \xrightarrow{\psi_1''} {\Spec} \dfrac{A[T]}{(T^{s}-\varpi)}.
\]
So we have $\psi_{1*}\pi_2^*\E_j\,=\,\psi_{1*}''\circ \psi_{1*}'\pi_2^*\E_j$. Now recall that
$\psi_{1*}'\pi_2^*\mc E_j$ is isomorphic to the invariant direct image $$((\psi_1'\circ \phi_1)_*\phi^*_1
\pi_2^*\E_j)^{\mu_{r_j}}.$$ Hence it follows that 
$\psi_{1*}\pi_2^*\E_j$ is the $\mu_{r_j}$-invariant direct image of $(\pi_2\circ \phi_1)^*\mc E_j
\,=\,(\phi_2\circ \pi_1)^*\E_j$. Now we have that 
$$(\phi_2\circ \pi_1)^*\E_j\ =\ \sum_{k=0}^{s-1}(E_{j,0}\oplus E_{j,1}\oplus\cdots\oplus E_{j,r_j-1})X^k.$$
Therefore, the $\mu_{r_i}$-invariant direct image is given by
\begin{equation}\label{eqn_local}
\psi_{1*}\pi^*_2\E_j\ =\ \sum_{l=0}^{e_j-1}\sum_{k=0}^{r_j-1}(E_{j,k})X^{r_jl+k}.
\end{equation}
Since $T\,\longmapsto\, T_jX$, multiplication by $T$ gives the filtration
\[
\begin{tikzcd}
E_{j,0}\ar[r,hook, "\times T_j" ] & E_{j,1}\ar[r,hook, "\times T_j"] & \ldots & E_{j,0}\ar[r,hook, "\times T_j"] & E_{j,1}\ar[r,hook, "\times T_j"] & \ldots & \ar[r,hook, "\times T_j"] & E_{j,0}\,.
\end{tikzcd}
\]
Note that this is same as the parabolic structure on $(f_*\widehat{\mathcal E})_{Y,y}$ as defined in
\eqref{eqn_direct_image_parabolic}. Define $\X_y\ :=\ \pi_X^{-1}\Spec B$.
Then the following diagram is fibered:
\[
\begin{tikzcd}
\X_y \ar[r,"f_y"] \ar[d] & \Y_y \ar[d] \\
\X \ar[r] & \Y 
\end{tikzcd}
\]
We also have a commutative diagram
\[
\begin{tikzcd}
\bigsqcup\limits_{j,j'} \Spec K(B) \ar[r] \ar[d,equal] & \bigsqcup\limits_j \X_j \ar[r,"i_j"] \ar[d] & \X_{y} \ar[r,"f_y"] \ar[d] & \Y_y \ar[d] \\
\bigsqcup\limits_{j,j'} \Spec K(B) \ar[r] & \bigsqcup\limits_j \Spec B_j \ar[r] & \Spec B \ar[r] & \Spec A
 \end{tikzcd}
\]
Here the middle and left most squares are fibered. In particular, $i_j$'s are \'etale. This implies that we
have an equalizer
\begin{equation}\label{eqn_global}
0 \, \longrightarrow\, f_{y*}\mc E_y\, \longrightarrow\, \bigoplus(f_y\circ i_j)_*\E_j\,
\rightrightarrows\, \bigoplus \E_{\eta}.
\end{equation}
Now consider the $\mu_s$-equivariant module $\sum_{l=0}^{e_j-1}\sum_{k=0}^{r_j-1}(\bigcap\limits_j 
E_{j,k})X^{r_jl+k}$ on $ {\Spec} \frac{A[T]}{(T^{s}-\varpi)}$. By (\ref{eqn_local}) its associated vector 
bundle on $\Y_y$ fits into the same exact sequence (\ref{eqn_global}). Hence we get that the associated 
parabolic structure of $\widehat{\widetilde f_*\E}$ at $y$ is given by $\frac{r_jl+k}{s}\, \longmapsto\,
(\bigcap\limits_j E_{j,k})$. This completes the proof of the first part of the theorem.

For the second part, let $U_1,\,\cdots,\,U_m$ be an open cover of $Y$ such that $$\widehat{\F}\big\vert_{U_j}
\ =\ \bigoplus\limits_k L(j,k)_* ,$$ 
where $L(j,k)_*$ are parabolic line bundles on $U_j$. Hence $$\F\big\vert_{\pi^{-1}(U_j)}
\ =\ \bigoplus\limits_k \mc L(j,k),$$
where $\mc L(j,k)$ is the line bundle on $\pi^{-1}(U_j)$ associated to the parabolic line bundle
$L(j,k)$. This implies that
$$\widetilde{f}^*\F\big\vert_{(\pi_Y\circ \widetilde{f})^{-1}(U_j)}\ =\ \bigoplus\limits_k \widetilde{f}^*\mc L(j,k)$$
and therefore, it suffices to establish the statement for case when ${\rm rank}~F\,=\,1$.

Let $x\,=\, x_i\,\in\, D_X$, $f(x)\,=\,y$ and $r\,=\,r_i$. If $y\,=\,y_j$ for some $j$, define $s\,=\,s_j$;
otherwise define $s_j\,=\, 1$. Let $\varpi_x$ and $\varpi_y$ be uniformizing parameters of $x$ and $y$
respectively. Let $e$ be the ramification index of $f$ at $x$. Let $\varpi_x^eu\,=\,\varpi_y$. Let $A
\,=:\,\mc O_{Y,y}$ and $B\,=:\,\mc O_{X,x}$. Then by Claim \ref{claim} we have that $s\,=\,re$ and a $2$-fiber
diagram
 \begin{equation}\label{eqn_claim_1}
\begin{tikzcd} 
{\Spec}~ \dfrac{B[T,X]}{(T^{r}-\varpi_{x}, X^s-u)} \ar[r,"\phi_1"] \ar[d,"\pi_1"] & \left[ {\Spec}~ \dfrac{B[T,X]}{(T^{r}-\varpi_{x}, X^s-u)} \middle/ \mu_{r} \right] \ar[r,"\psi_1"] \ar[d,"\pi_2"] & {\Spec} \dfrac{A[T]}{(T^{s}-\varpi_y)} \ar[d,"\pi_3"] \\
 {\Spec}~ \dfrac{B[T]}{(T^{r}-\varpi_{x})} \ar[r,"\phi_2"] & \left[{\Spec}~ \dfrac{B[T]}{(T^{r}-\varpi_{x})} \middle/\mu_{r} \right] \ar[r,"\psi_2"] & \left[{\Spec} \dfrac{A[T]}{(T^{s}-\varpi)}\middle/\mu_{s} \right]
\end{tikzcd}
\end{equation}
where $\psi_1\circ \phi_1$ is given by $T\, \longmapsto\, TX$. 
Denote the pullback of $\F$ to $\Y_y\,:=\, \Spec A \times \Y$ by $\F_y$.
Now if $\widehat{\F}_y$ has parabolic weight $\alpha$ with underlying bundle $F_y$ then $\pi_3^*\mc F_y$
is the module $\bigoplus\limits_{i=0}^{s-1} F^i_y$ with $F^i_y\,=\,F_y$ for $i\,\leq\, s\alpha$ and
$F^i_y\,=\,\varpi_yF_y$ for $i\,>\,s\alpha$.
Note that 
$$\bigoplus\limits_{i=0}^{s-1} F^i_y\ =\ F_y\otimes (T^{-s\alpha})$$
as $\frac{A[T]}{(T^s-\varpi_y)}$ modules. Hence $(\pi_3\circ \psi_1\circ \phi_1)^*\F_y$ is the module 
$$(F_y\otimes_A B)\otimes ((TX)^{-s\alpha})\ =\ (F_y\otimes_A B)\otimes (T^{-s\alpha}).$$
Since $\pi_1$ is a $\mu_s$-torsor, the module $(\psi_2\circ\phi_2)^*\F_y$ is the $\mu_s$-invariant direct image of $(\psi_1\circ \phi_1\circ \pi_3)^*\F_y$
. Therefore we get that
\begin{align*}
(\psi_2\circ\phi_2)^*\F_y\,=\, & (F_y\otimes_A B)\otimes (T^{-s\alpha}) \\
=\, & (F_y\otimes_A B)\otimes (T^{-r\lfloor\frac{s\alpha}{r}\rfloor}.T^{-r\{\frac{s\alpha}{r}\}}) \\
=\, & (F_y\otimes_A B)\otimes (T^{-r\lfloor e\alpha\rfloor}.T^{-r\{e\alpha\}})\,. \\
=\, & (F_y\otimes_A B)\otimes (\varpi_x^{-\lfloor e\alpha\rfloor}.T^{-r\{e\alpha\}}). \\
\end{align*}
Therefore, the underlying module of $\psi_2^*\F_y$ is $F_y\otimes_A \varpi_x^{-\lfloor e\alpha\rfloor}B$ and the
parabolic weight is $\{e\alpha\}$, which is same as the underlying bundle and parabolic weight of the pullback
parabolic line bundle $f^*\widehat{\F}_y$, as discussed in \S \ref{section_pullback}. This completes the proof of the theorem.
 \end{proof}
Let $L_*$ be a parabolic line bundle of parabolic degree zero on $Y$ and let $\mc L$ be the corresponding
line bundle on $\Y$. An $L_*$-valued parabolic symplectic (respectively, orthogonal) bundle is a parabolic
vector bundle
$F_*$ on $Y$ together with a map $\widehat{\phi}\,:\,F_*\otimes F_*\,\longrightarrow\, L_*$ which is
antisymmetric (respectively, symmetric) such that the induced map $F_*\,\longrightarrow\, F_*^{\vee}\otimes L_*$
is an isomorphism \cite[Definition 2.1]{Alfaya_Biswas_Machu}. 
By \cite[Proposition 4.1]{Alfaya_Biswas_Machu}, the parabolic vector bundle $f^*F_*$ endowed with the map
$f^*\widehat \phi$ is $f^*L_*$-valued parabolic symplectic (respectively, orthogonal) bundle. Let $\F$ be
the bundle on $\Y$ corresponding to $F$ and let $\phi\,:\,\F\otimes \F\,\longrightarrow\, \mc L$ be the morphism
corresponding to $\widehat{\phi}$. By \cite[Theorem 4.0.11]{Chakraborty_Majumdar}, $\F$ endowed with $\phi$
is a symplectic (respectively, orthogonal) bundle on $\Y$. By Theorem \ref{main theorem}, we have the following:

\begin{corollary}\label{cor1}
The parabolic symplectic (respectively, orthogonal) bundle $(f^*F_*,\,f^*\widehat\phi)$ is isomorphic to
the parabolic symplectic (respectively, orthogonal) bundle $(\widehat{\widetilde f^*\F},\,
\widehat{\widetilde f^*\phi})$.
\end{corollary}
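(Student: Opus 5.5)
The plan is to deduce the corollary from the pullback half of Theorem \ref{main theorem} together with functoriality of the Borne equivalence ${\rm Vect}(\X)\simeq{\rm Par}_{\frac{1}{\bf r}}(X,D_X)$ with respect to tensor products. First I fix the canonical isomorphism of parabolic bundles
$$\Theta_{\F}\,:\,\widehat{\widetilde f^*\F}\ \xrightarrow{\,\,\sim\,\,}\ f^*\widehat{\F}\ =\ f^*F_*$$
supplied by Theorem \ref{main theorem}. I also fix the analogous isomorphism $\Theta_{\mc L}\,:\,\widehat{\widetilde f^*\mc L}\,\cong\, f^*L_*$ for the line bundle. It then remains to check that $\Theta_{\F}$ carries $\widehat{\widetilde f^*\phi}$ to $f^*\widehat\phi$, i.e. that
$$\Theta_{\mc L}\circ\widehat{\widetilde f^*\phi}\ =\ f^*\widehat\phi\circ(\Theta_{\F}\otimes\Theta_{\F})$$
as maps $\widehat{\widetilde f^*\F}\otimes\widehat{\widetilde f^*\F}\to f^*L_*$. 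Here $\widehat{\widetilde f^*\phi}$ is viewed through the identification $\widehat{\mc A\otimes\mc B}\cong\widehat{\mc A}\otimes\widehat{\mc B}$, since Borne's equivalence is a tensor equivalence.

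\textbf{Reduction to the generic point.} Both sides are morphisms between parabolic vector bundles, in particular between locally free sheaves on $X$, so they agree once they agree on a dense open set. Away from $f^{-1}(D_Y)\cup R_f\cup D_X$ the stacks $\X,\Y$ coincide with $X,Y$ and $\widetilde f$ with $f$. There all parabolic structures are trivial, and the pullback construction of \S\ref{section_pullback} is canonically $f^*$ of the underlying bundle. I need to make sure $\Theta$ restricts there to this tautological identification. This should follow because the isomorphism in the proof of Theorem \ref{main theorem} was built locally from the description $(\psi_2\circ\phi_2)^*\F_y=(F_y\otimes_AB)\otimes(T^{-s\alpha})$, which is the identity on generic fibres, and because the pullback gluing uses exactly these generic identifications. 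Granting this, both sides restrict to $f^*\phi|_{\rm generic}$, and the identity follows.

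\textbf{Conclusion.} $\Theta_{\F}$ then gives an isomorphism of $f^*L_*$-valued bilinear parabolic bundles. The nondegeneracy and (anti)symmetry on either side come from \cite[Proposition 4.1]{Alfaya_Biswas_Machu} and \cite[Theorem 4.0.11]{Chakraborty_Majumdar}, and in any case they transfer across an isomorphism. So $(f^*F_*,f^*\widehat\phi)\cong(\widehat{\widetilde f^*\F},\widehat{\widetilde f^*\phi})$.

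\textbf{Main obstacle.} The delicate step is the compatibility of the isomorphism of Theorem \ref{main theorem} with tensor products and with the generic identification, especially given the reduction to the rank-one case via local splittings $F_*|_{U_j}=\bigoplus L(j,k)_*$. I would handle it by noting that all identifications involved are inclusions inside the common generic fibre $F\otimes_{\mc O_Y}K(X)$. An isomorphism of torsion-free sheaves that is the identity generically is then unique, which makes it automatically independent of choices and compatible with $\otimes$.
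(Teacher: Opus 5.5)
Your proposal is correct and follows the paper's route: the paper deduces the corollary directly from the pullback half of Theorem \ref{main theorem}. Your extra step makes explicit a compatibility the paper leaves implicit. You observe that the isomorphism $\widehat{\widetilde f^*\F}\cong f^*\widehat{\F}$ is an equality of lattices inside the common generic fibre, hence the tautological identification off $f^{-1}(D_Y)\cup R_f\cup D_X$. Since maps of locally free sheaves on $X$ are determined generically, it therefore intertwines $\widehat{\widetilde f^*\phi}$ with $f^*\widehat\phi$.
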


Similarly, we can define direct image of parabolic symplectic (respectively, orthogonal) bundles as follows
\cite[\S~
5.1]{Alfaya_Biswas_Machu}. Let $\E$ is a vector bundle on $\X$ with corresponding parabolic vector bundle $E_*$ and 
let $\widehat{\psi}\,:\,E_*\otimes E_*\,\longrightarrow\, f^*L_*$ is parabolic symplectic (respectively,
orthogonal) structure. Let 
$\widehat{\psi'}\,:\,E_*\,\xrightarrow{\,\,\,\sim\,\,\,}\, E^{\vee}_*\otimes f^*L$ be the induced map. By
projection formula, we 
get that a map $f_*\widehat{\psi'}\,:\,f_*E_*\,\xrightarrow{\,\,\,\sim\,\,\,}\, (f_*E_*)^{\vee}\otimes L$
which induces a map $f_*E_*\otimes f_*E_*\,\longrightarrow\, L_*$. By \cite[Lemma 5.1]{Alfaya_Biswas_Machu}
$(f_*E_*,f_*\widehat{\psi'})$ is a 
parabolic symplectic (respectively, orthogonal) bundle on $(Y,D_Y)$. Let us denote $\psi'\,:\,
\E\,\xrightarrow{\,\,\,\sim\,\,\,}\, 
\E^{\vee}\otimes \widetilde f^*L$ be morphism on $\X$ corresponding to the morphism $\widehat{\psi'}$. $\psi'$ makes $\mathcal E$ a symplectic
(respectively, orthogonal) bundle on $\mathcal{X}$ by
\cite[Theorem 4.0.11]{Chakraborty_Majumdar}.Then we 
have the following:

\begin{corollary}\label{cor2}
The parabolic symplectic (respectively, orthogonal) bundle $(f_*E_*,\,f_*\widehat \psi')$ is
isomorphic to the parabolic symplectic (respectively, orthogonal) bundle $\widehat{\widetilde f_*\E},\,
\widehat{\widetilde f_*\psi'}$.
\end{corollary}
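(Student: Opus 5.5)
The statement to prove is Corollary \ref{cor2}. I will deduce it from the first part of Theorem \ref{main theorem}, using the compatibility of the correspondence $\E\longmapsto\widehat{\E}$ with tensor products and duals. The equivalence ${\rm Vect}(\X)\simeq{\rm Par}_{\frac{1}{\bf r}}(X,D_X)$ of \cite[Theorem 2.4.7]{Borne} is a tensor equivalence: it sends $\E^\vee$ to $\widehat{\E}^\vee$ and $\E\otimes\E'$ to $\widehat{\E}\otimes\widehat{\E'}$, and the same holds on $\Y$. This is the fact underlying \cite[Theorem 4.0.11]{Chakraborty_Majumdar}. By the second part of Theorem \ref{main theorem}, the line bundle $\widetilde f^*\mc L$ corresponds to $f^*L_*$. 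Hence $\psi'$ is exactly the morphism $\E\to\E^\vee\otimes\widetilde f^*\mc L$ that corresponds to $\widehat{\psi'}$.

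The key step is to compare the two projection-formula isomorphisms. On the stack side, $\widetilde f$ is finite and étale, so Grothendieck duality gives $\widetilde f_*(\E^\vee)\cong(\widetilde f_*\E)^\vee$. The projection formula gives $\widetilde f_*(\E^\vee\otimes\widetilde f^*\mc L)\cong(\widetilde f_*\E)^\vee\otimes\mc L$. Then $\widetilde f_*\psi'$ is the composite of $\widetilde f_*$ applied to $\psi'$ with these isomorphisms.

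I would apply the natural isomorphism of functors from Theorem \ref{main theorem}, $\widehat{\widetilde f_*(-)}\cong f_*\widehat{(-)}$, to the morphism $\psi'$. By naturality, $\widehat{\widetilde f_*\psi'}$ is identified with $f_*\widehat{\psi'}$, provided the stacky duality and projection isomorphisms are carried to the parabolic ones used in \cite[\S~5.1]{Alfaya_Biswas_Machu}.

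The main obstacle is checking this compatibility. My approach would be to observe that both sides are isomorphisms between vector bundles on $\Y$, equivalently between parabolic bundles on $Y$. Such a morphism is determined by its restriction to the open dense locus $Y\setminus(D_Y\cup f(R_f))$, because the Hom sheaves are torsion free. Over this open set, $\pi_X$ and $\pi_Y$ are isomorphisms and the parabolic structures are trivial. There both isomorphisms reduce to the usual trace-pairing duality $f_*(E^\vee)\cong(f_*E)^\vee$ and the usual projection formula for the étale map $f$, so they agree.

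Finally, the induced pairings $\widetilde f_*\E\otimes\widetilde f_*\E\to\mc L$ and $f_*E_*\otimes f_*E_*\to L_*$ are obtained from these isomorphisms by the same tensor-hom adjunction. Therefore $(f_*E_*,f_*\widehat{\psi'})\cong(\widehat{\widetilde f_*\E},\widehat{\widetilde f_*\psi'})$ as parabolic symplectic (respectively, orthogonal) bundles.
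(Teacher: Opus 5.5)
Your proposal is correct and follows the same route as the paper, which deduces Corollary \ref{cor2} directly from the direct-image part of Theorem \ref{main theorem}, together with the correspondence of \cite[Theorem 4.0.11]{Chakraborty_Majumdar}. The paper leaves implicit the check that the stacky and parabolic duality and projection-formula isomorphisms match under the canonical, generically identity, isomorphism $\widehat{\widetilde f_*\E}\cong f_*E_*$; your density argument over $Y\setminus(D_Y\cup f(R_f))$, where everything reduces to the trace pairing and the usual projection formula for the \'etale map $f$, is a sound way to supply that step.
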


\section*{Acknowledgements}

We thank the referee for useful comments. The first-named author is
partially supported by a J. C. Bose Fellowship (JBR/2023/000003).

\end{document}